\newcommand{\bm}[1]{{\boldsymbol{#1}}}
\theoremstyle{definition}
\newtheorem{example}{Example}
\newtheorem*{invprob}{Inverse Problem}
\theoremstyle{plain}
\newtheorem{theorem}{Theorem}
\newtheorem{proposition}{Proposition}
\newtheorem{corollary}{Corollary}
\theoremstyle{remark}
\newtheorem{remark}{Remark}
\begin{document}

\begin{frontmatter}



\title{On Lyapunov functions and gradient flow structures in linear consensus systems}


\author[hma]{Herbert Mangesius}
\address[hma]{Department of Electrical, Electronic and Computer Engineering, Technische Universit\"{a}t M\"{u}nchen, Arcisstrasse 21, D-80209 Munich, Germany}
\fntext[]{The corresponding author is H. Mangesius.
The work was accomplished during the first author's research stay at ICTEAM, Universit\'{e} catholique de Louvain.}
\ead{mangesius@tum.de}

\author[jcd]{Jean-Charles Delvenne}
\address[jcd]{ICTEAM and CORE, Universit\'{e} catholique de Louvain, 4 Avenue Lema\^{i}tre, B-1348 Louvain-la-Neuve, Belgium}

\begin{abstract}
Quadratic Lyapunov functions are prevalent in stability analysis of linear consensus systems. In this paper we show that weighted sums of convex functions of the different coordinates are Lyapunov functions for irreducible consensus systems.
We introduce a particular class of additive convex Lyapunov functions that includes as particular instances the stored electric energy in RC circuits, Kullback-Leiber's information divergence between two probability measures, and Gibbs free energy in chemical reaction networks. 
On that basis we establish a general gradient formalism that allows to represent linear symmetric  consensus dynamics as a gradient descent dynamics of any additive convex Lyapunov function, for a suitable choice of local scalar product.
This result generalizes the well-known Euclidean gradient structures of consensus 
to general Riemannian ones. 
We also find natural non-linear consensus dynamics differing only from linearity by a different Riemannian structure, sharing a same Lyapunov function. We also see how information-theoretic Lyapunov functions, common in Markov chain theory, generate linear consensus through an appropriate gradient descent.  From this unified perspective we hope to open avenues for the study of convergence for linear and non-linear consensus alike.
%
\end{abstract}

\begin{keyword}
Consensus systems \sep Lyapunov functions \sep gradient flows


\end{keyword}

\end{frontmatter}


\section{Introduction}
The  linear consensus algorithm, cf., e.g., \cite{Blondel2005} and \cite{Murray2007} for
 an overview and further references, is a generic element in network systems theory, where it appears as an essential building block at the heart of diverse network applications in system modelling, design, but also in methods for system analysis.  
In proving stability, quadratic Lyapunov functions are a prominent choice, that additionally provide appealing interpretations as vanishing disagreement within a network of interacting agents.
 As noted in \cite{Moreau2004}, the particular type of functions that yield a basis to construct possible Lyapunov functions,
 ``may point towards generalizations of the present theory [of linear consensus systems] to more complex coordination models, possibly involving non-linear [...] dynamics''.
In this context, note that quadratic Lyapunov functions underlie the particular hypothesis that (internal) dissipation processes are linear, cf., e.g., \cite{Willems1972p2}, 
thus making it interesting which types of functions, other than quadratic ones, do also yield Lyapunov inequalities.
Gradient approaches to stability analysis play a particular role, as a Lyapunov function that is also a potential for the system (in the sense that the dynamics can be written as a   gradient descent of the Lyapunov function) determines, besides the qualitative (asymptotic) behaviour, also the quantitative behaviour.
Here, we are interested in characterizing the set of functions that are Lyapunov functions for irreducible consensus systems, under the hypothesis that Lyapunov function candidates are additive - a natural requirement for network systems, cf., e.g., \cite{Willems1972p1}.
For symmetric consensus systems we seek to characterize conditions that render an additive Lyapunov function a potential, in which the linear dynamics generates a gradient flow. 

Related to this problem is the work \cite{Willems1976}, where J.C. Willems shows that convexity is a sufficient condition for an additive function to be a Lyapunov function for a diagonally dominant linear system.
The commonly known gradient structure for symmetric consensus systems in the Laplacian potential, also-called the group disagreement, is for instance explained in \cite{Murray2004}. In the work \cite{vdSchaft2011a}, A. van der Schaft 
shows that the gradient dynamics in the group disagreement can equivalently be seen as
gradient dynamics in the well-known sum-of-squares Lyapunov function, also called the collective disagreement, see \cite{Murray2007}. 

The main contributions of this work are as follows:
 We give a necessary and sufficient characterization of a weighted additive function to be a (strict) Lyapunov function for irreducible consensus systems.
The Euclidean gradient flow structure with potential the collective disagreement is generalized to Riemannian gradient dynamics in additive strictly convex potentials.
We provide a novel output feedback representation of LTI symmetric consensus dynamics, that coincides with a non-linear port-Hamiltonian realization, in which a non-linear controller, given by a state dependent Laplace matrix, interacts with a non-linear integrator system. 
The disagreement measures introduced in \cite{Murray2004} and  \cite{Murray2007} are generalized to disagreement functions in which convexity is instrumental.
We provide a non-linear circuits context that seems to be novel to linear consensus theory, where the Lyapunov function of a consensus system is interpreted as the energy stored
in a nonlinear capacitance, while the Riemannian geometry of the state space is encoded into the behaviour of non-linear resistances. For a suitable choice of geometry, these non-linearities cancel out and create a linear consensus dynamics for the voltages at different nodes of the circuit. If we decouple the geometry from the energy function, we are able to explore a variety of non-linear consensus dynamics, as we exemplify on a thermodynamic example. 
    

\section{Preliminaries \label{sec:prel}}

\subsection{Gradient systems \label{ssec:gradsys}}
Gradient descent systems, or gradient flows, are dynamical systems whose trajectories follow the steepest descent of a scalar potential function $V$. In Euclidean space with the usual scalar product this is expressed by the familiar equation
\begin{equation}
\dot{\bm{x}} = -\nabla V(\bm{x}),
\label{eq:usualgrad}
\end{equation} 
where $\bm{x} \in M \subset \mathbb{R}^n$. 
In state coordinates where the scalar product is described by a positive definite and symmetric matrix $\textbf{G}$ as $\langle \bm{x}, \bm{y} \rangle= \bm{x}\cdot \textbf{G}\bm{y}$, this equation takes the more general form
\begin{equation}
\dot{\bm{x}} = -\textbf{G}^{-1}\nabla V(\bm{x}).
\label{eq:Ggrad}
\end{equation}
This is swiftly generalized to any Riemannian manifold, which is a smooth manifold where the tangent space $T_{\bm{x}}M$ at every point $\bm{x}\in M$ is endowed with a scalar product, that varies smoothly in $\bm{x}$.
In some system of coordinates for the manifold, this scalar product is determined by a variable, positive definite and symmetric matrix $\textbf{G}(\bm{x})$, and gradient flows are naturally represented by 
\begin{equation}
\dot{\bm{x}} = -\textbf{G}^{-1}(\bm{x})\nabla V(\bm{x}),
\label{eq:GgradRiem}
\end{equation}
see for instance \cite{SimpsonPorco2014}.

 In this paper we are interested in the following general inverse problem:
\begin{invprob}
 Given differentiable dynamics $\dot{\bm{x}}= \bm{f}(\bm{x})$ 
  with Lyapunov function $V$, find a Riemannian structure defined by $\textbf{G}^{-1}(\cdot)$ for which the dynamics is a gradient descent of $V$,  i.e.,  \mbox{$\bm{f}(\bm{x})=-\textbf{G}^{-1}(\bm{x})\nabla V(\bm{x})$}.
\end{invprob}

As we shall see, the state equations of consensus systems do not uniquely characterize a gradient structure, so that the Inverse Problem has several solutions in general. We find a specific solution that respects the network topology, i.e.,  $\textbf{G}^{-1}$ and $\textbf{L}$ share the same zero/nonzero pattern.

\subsection{Consensus systems} 
A linear time-invariant (LTI) consensus system
is defined by the dynamics in vector  and component form
\begin{equation}\label{eq:consdyn}
\dot{\bm{x}}(t)=-\textbf{L}\bm{x}(t), \ \ \ 
\dot{x}_i(t)=\sum_{j\not = i}^n l_{ij}(x_j(t) - x_i(t)), \ \ 1\leq i\leq n,
\end{equation}
where $\textbf{L}$ is a positive semi-definite graph Laplace matrix.
Let $\mathsf{G}=(N,E,w)$ be a weighted directed graph with
 $N=\{1,\ldots,n\}$ the set of nodes, $E \subseteq N \times N$, the set of edges, with elements ordered pairs $(i,j)\in E$, and with weighting function $w:E\to \mathbb{R}_{> 0}$ that assigns to each directed edge~$(i,j) \in E$ a coupling strength given by a positive number $w_{ij}$.
Then, the graph Laplace matrix is defined as
\begin{equation}\label{eq:graphlaplacianmatw}
 [\textbf{L}]_{ij}=l_{ij}:= \left\{\begin{tabular}{l l}
                                      $-w_{ij}$, & if \ \ $(i,j) \in E$, \\
                                      $0$, & if \ \ $(i,j) \not\in E$, \\
                                      $\sum_{j: (i,j) \in E} w_{ij}$, & if \ \ $j=i \in N$.
                                     \end{tabular}\right. 
\end{equation}

The dynamical system generated by dynamics \eqref{eq:consdyn}
 has flow map 
 \begin{equation}\label{eq:perronmatrix}
\textbf{P}: \ \ \ \textbf{P}^{(t)}=e^{-\textbf{L}t},  \ \ \ 0 \leq t,
\end{equation}
which is a stochastic matrix, \cite{Willems1976,Murray2007,Lawson1999}, i.e., satisfying
\begin{equation}\label{eq:perronmatrix1}
[\textbf{P}^{(t)}]_{ij} \geq 0,  \ \ \ \sum_{j=1}^n [\textbf{P}^{(t)}]_{ij}=1, \ \ \forall i\in N,\ \ \text{and}\ \ t \geq 0.
\end{equation}
Eigenvectors of $\textbf{L}$ and $\textbf{P}^{(t)}$ for any $t>0$ are identical, cf., e.g., \cite{Murray2007}. 
In particular, the vector of ones, $\bm{1}$, occurs to be a right eigenvector 
associated to $\lambda_1(\textbf{L})=0$, the eigenvalue of smallest modulus of $\textbf{L}$, respectively of $\lambda_1(\textbf{P})=1$, the eigenvalue of maximum modulus of the transfer matrix.
Let $\bm{q} \in \mathbb{R}^n_{\geq 0}$ denote the associated left eigenvector.
Then,
\begin{equation}\label{eq:specpropPL}
\bm{q}^\top\textbf{L}=\bm{0}^\top \ \text{and} \ \ \textbf{L}\bm{1}=\bm{0}
\ \ \Leftrightarrow \ \ \bm{q}^\top\textbf{P}^{(t)}=\bm{q}^\top \ \text{and} \ \ \textbf{P}^{(t)}\bm{1}=\bm{1}.
\end{equation}
We normalize $\bm{q}$ such that $||\bm{q}||_1=1$; then, $\bm{q}$ is called Perron vector of $\textbf{P}^{(t)}$.

A graph $\mathsf{G}$ is said to be strongly connected if $\mathsf{rank}(\textbf{L})=n-1$.
In this case,  $\textbf{P}^{(t)}$ for any $t>0$ is an irreducible matrix, so that from
Perron-Frobenius theory for non-negative matrices, cf., e.g., \cite{Meyer2001} ch. 8, it is known that $q_i>0$, for all $1\leq i\leq n$.
A consensus system is said to be average preserving if in addition to row sums also column elements of the Laplacian matrix (the transfer matrix) sum to zero (one).
In this case the Perron vector is given as $\bm{q}=\frac{1}{n}\bm{1}$.
A consensus system is said to be symmetric if $l_{ij}=l_{ji}$ for all $i,j\in N$.

From  $\mathsf{ker}(\textbf{L})=\{c\bm{1},c\in \mathbb{R}\}$ it follows
translation invariance of dynamics by constant vectors, i.e., $\textbf{L}\bm{x}=\textbf{L}(\bm{x}+c\bm{1})$, for any $c\in \mathbb{R}$. 
Due to $\bm{q}^\top\textbf{P}^{(t)}=\bm{q}^\top$, the weighted average
\begin{equation}\label{eq:invquant}
\sum_{i\in N}q_ix_i(t)=\bm{q}^\top\textbf{P}^{(t)}\bm{x}(0)=\bm{q}^\top\bm{x}(0)=constant, \ \ \forall t\geq 0,
\end{equation}
i.e., it remains invariant along trajectories of a consensus system, so that
 an initialization of \eqref{eq:consdyn} with $\bm{x}(t=0)=\bm{x}_0$ 
defines the asymptotically reached agreement value
$\mathsf{a}(\bm{x}_0):=\bm{q}^\top\bm{x}_0$,
i.e., $\lim_{t\to\infty} \textbf{P}^{(t)}\bm{x}_0=
\frac{\bm{1}\bm{q}^\top}{\bm{q}^\top\bm{1}}\bm{x}_0=\bm{q}^\top\bm{x}_0\bm{1}=\mathsf{a}(\bm{x}_0)\bm{1}$.

Due to these invariance properties, without loss of generality, we may set
 as configuration space for irreducible consensus systems
the $n-1$ dimensional simplex
\begin{equation}\label{eq:hyperplane}
M(\alpha):=\left\{ \bm{x} \in \mathbb{R}_{> 0}^n:  \sum_{i=1}^n q_ix_i= \alpha=constant \right\}, \ \ \ \alpha >0,
\end{equation}
which contains all positive trajectories of final consensus value $\alpha$.
In \eqref{eq:hyperplane}, the parameter $\alpha$ can be chosen freely from $\mathbb{R}_{>0}$, because the LTI dynamics is homogeneous,
i.e., $|c \textbf{L}\bm{x}|=c|\textbf{L}\bm{x}|, c\in \mathbb{R}_{>0}$,  
so that (dynamical) system properties are the same for every value $\alpha$.
The peculiarities of a specific application may however motivate
to choose a particular value for $\alpha$. The motivation for focusing on the simplex as a configuration space comes from the following example.

\begin{example}[Markov chains \& consensus system configurations]\label{ex:configspace}
Choosing for instance a scaling constant $c= \mathsf{a}(\bm{x}_0)^{-1}/n$, yields for balanced consensus systems the configuration space~\mbox{$M(\alpha=1/n)=\{ \bm{x} \in \mathbb{R}_{> 0}^n: \sum_{i\in N}x_i=1 \}$}, which is the set of positive discrete probability vectors; the associated equilibrium state $\frac{1}{n}\bm{1}$ then corresponds to the invariant probability distribution, cf., e.g., \cite{Cover1991}.
\end{example}

\subsection{Three Lyapunov functions \&  Disagreement   \label{ssec:3liap}}

%

The Laplacian potential is defined as
\begin{equation}\label{eq:laplacianpotentialr}
V_{\textnormal{L}}(\bm{x}):=\frac{1}{2}\bm{x}\cdot \textbf{L}\bm{x}=\frac{1}{2}\sum_{(i,j) \in E} l_{ij} (x_i-x_j)^2,
\end{equation}
and it specifies the \textit{group disagreement} $\Psi_{\textnormal{L}}(\bm{x}):=2V_{\textnormal{L}}(\bm{x})$. Both functions are well known Lyapunov functions for symmetric consensus systems~\cite{Murray2004,Murray2007}.
With  \eqref{eq:laplacianpotentialr} as Lyapunov function, the symmetric dynamics \eqref{eq:consdyn} can be written as
\begin{equation}\label{eq:laplgradflow}
\dot{\bm{x}}=-\nabla V_{\textnormal{L}}(\bm{x}),
\end{equation}
in which we recognize a gradient flow on the Euclidean space $\mathbb{R}^n$ with the canonical scalar product, where $\textbf{G}=\textbf{I}=\textbf{G}^{-1}$.

Another common Lyapunov function for average preserving systems is simply the square distance to consensus, for the canonical two-norm, leading to the sum-of-squares function
\begin{equation}\label{eq:Vsos}
V_{\textnormal{SoS}}(\bm{x}):=
\frac{1}{2}\sum_{i=1}^n\left(x_i-\mathsf{a}(\bm{x})\right)^2=\frac{1}{2}||\bm{x}-\mathsf{a}(\bm{x})\bm{1}||^2.
\end{equation}
which is also called the \textit{collective disagreement} \cite{Murray2007}. 
If the consensus dynamics is symmetric, then $\dot{\bm{x}} = - \textbf{L}\bm{x}$ does not represent a gradient flow of \eqref{eq:Vsos} for the usual canonical scalar product. However, it is a gradient dynamics for the
 scalar product determined by $\textbf{G}^{-1}=\textbf{L}$:
With $\nabla V_{\textnormal{SoS}}(\bm{x})=\bm{x}-\mathsf{a}(\bm{x})\bm{1}$ we can write 
\begin{equation}
\dot{\bm{x}}=-\textbf{G}^{-1}\nabla V_{\textnormal{SoS}}(\bm{x})=-\textbf{L}(\bm{x}-\mathsf{a}(\bm{x})\bm{1}) =-\textbf{L}\bm{x},
\end{equation}
as $\mathsf{a}(\bm{x})\bm{1} \in \mathsf{ker}(\textbf{L})$.
This is an instance
where $\textbf{G}^{-1}=\textbf{L}$ is not positive definite, but semi-definite, yet the matrix determines a positive definite inner product $\nabla V_{\textnormal{SoS}}(\bm{x})\cdot \textbf{L} \nabla V_{\textnormal{SoS}}(\bm{x})$ for all $\bm{x}\in M$,
because $\nabla V_{\textnormal{SoS}}(\bm{x}) \in \mathsf{ker}(\textbf{L})$ if and only if $\bm{x}\in \mathsf{ker}(\textbf{L})$, i.e., if and only if the consensus system has reached the equilibrium point.

%

The sum-of-squares potential is in close relation with the Laplacian potential and group disagreement, as for symmetric consensus systems the latter occurs to be the dissipation rate of the former: 
Differentiating w.r.t. time yields
\begin{equation}\label{eq:Vsoscontraction}
\dot{V}_{\textnormal{SoS}}(\bm{x})\equiv -\nabla V_{\textnormal{SoS}}(\bm{x}) \cdot \textbf{L}\nabla V_{\textnormal{SoS}}(\bm{x})= - ||\nabla V_{\textnormal{SoS}}(\bm{x})||_{\textbf{L}}^2=-\Psi_{\textnormal{L}}(\bm{x}).
\end{equation}

A third Lyapunov function arises as an interpretation of the average preserving consensus equation as the evolution of a probability distribution $\bm{x}$ in the simplex 
$M(\alpha=1/n)=\{\bm{x} \in \mathbb{R}_{> 0}^n: \sum_{i\in N}x_i= 1\}$, if $x_i$ is the probability of presence of a continuous-time Markovian random walker on node $i\in N$, as transition rates on the network are described by the entries of $-\textbf{L}$, see, e.g., \cite{Lovasz1993} with \cite{Norris1997} chapter 6, and compare to Example \ref{ex:configspace}.
The Markov chain converges to the invariant distribution given by the Perron vector $\bm{q}=\frac{1}{n}\bm{1}$, 
and a natural Lyapunov function in this case is the Kullback-Leibler divergence,
$D(\bm{x}||\frac{1}{n}\bm{1}):=\sum_{i\in N} x_i\log\frac{x_i}{n^{-1}}$, i.e., the relative entropy of the discrete probability distribution $\bm{x}$ w.r.t
 $\frac{1}{n}\bm{1}$.
 Of course, $D(\bm{x}||\frac{1}{n}\bm{1})$ is also a Lyapunov function for average preserving consensus systems, as for instance demonstrated in \cite{Willems1976}, and as the term ``divergence'' suggests, it is also a (collective) disagreement measure.

Just as the sum-of-squares Lyapunov function, also the Kullback-Leibler divergence is additive.
In the following section we first characterize a class of additively structured Lyapunov functions that are strict, i.e., they decrease strictly along trajectories and cease to decrease only at the fixed point.
For this class, we shall solve the Inverse Problem.

\section{Additive Lyapunov functions \& Gradient structures}

\subsection{Lyapunov inequalities:  Convexity \& Relative measures}

 Let $I\subset \mathbb{R}$ be an interval. A function $f: I\to\mathbb{R}$ is said to be convex (concave) if for all $\bm{x} \in I^n$, and positive real numbers $a_1, \ldots,a_n$, such that $\sum_{i=1}^na_i=1$,
\begin{equation}\label{eq:Jensen}
\tag{Jensen}
f\left(\sum_{i=1}^n a_i x_i\right) \overset{(\geq)}{\leq} \sum_{i=1}^n a_i f(x_i).
\end{equation}
Strict inequality yields strict convexity (concavity) of $f$.
\begin{theorem}[Additive Lyapunov functions \& Strict convexity]\label{thm:extensiveLyap}
Consider an irreducible linear consensus system.
Let $H: \mathbb{R} \to \mathbb{R}$ be a continuous function, $\bm{q}$ the left Perron vector, $\beta >0$ and $c>0$  be two parameters.
The additive function
\begin{equation}\label{eq:thm1liapfunc}
V(\bm{x})=\beta \sum_iq_iH(c x_i)
\end{equation} 
 is a strict Lyapunov function if and only if $H$ is strictly convex.
 \end{theorem}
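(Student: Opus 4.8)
The plan is to prove both implications through the flow map $\textbf{P}^{(s)}=e^{-\textbf{L}s}$ together with \eqref{eq:Jensen}, never differentiating $V$ — which is not allowed here, since $H$ is only assumed continuous. For sufficiency (strict convexity of $H$ implies $V$ is a strict Lyapunov function), fix $t\ge 0$ and $s>0$ and write $x_i(t+s)=\sum_j[\textbf{P}^{(s)}]_{ij}\,x_j(t)$, a convex combination of the $x_j(t)$ because $\textbf{P}^{(s)}$ is row-stochastic. Applying \eqref{eq:Jensen} coordinatewise to $H$ (with the weights $[\textbf{P}^{(s)}]_{ij}$), then multiplying by $q_i>0$, summing over $i$, and using $\bm{q}^\top\textbf{P}^{(s)}=\bm{q}^\top$ to collapse the double sum, yields $V(\bm{x}(t+s))\le V(\bm{x}(t))$. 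For strictness I will invoke the standard fact that an irreducible Laplacian has $e^{-\textbf{L}s}$ entrywise positive for every $s>0$ (a Perron--Frobenius / continuous-time-Markov-chain property), so each $x_i(t+s)$ is a convex combination of \emph{all} the $x_j(t)$ with strictly positive weights; if $\bm{x}(t)$ is not a consensus state the $x_j(t)$ are not all equal, strict convexity forces a strict inequality in every coordinate, and hence $V(\bm{x}(t+s))<V(\bm{x}(t))$, while $V$ is obviously constant on consensus states (which are exactly the fixed points, and a trajectory is at consensus for some $t$ iff it is at $t=0$). The same Jensen estimate with the Perron weights also shows $V\ge\beta H(c\alpha)$ on $M(\alpha)$ with equality only at the equilibrium, confirming $V$ is a genuine Lyapunov function in the textbook sense.

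For necessity I argue by contraposition, using the dichotomy ``$H$ not convex'' versus ``$H$ convex but not strictly convex''. In the first case, since $\bm{x}(t)\to(\bm{q}^\top\bm{x}_0)\bm{1}$ and $V$ is assumed non-increasing along trajectories, letting $s\to\infty$ in $V(\bm{x}(s))\le V(\bm{x}_0)$ gives $\beta\sum_i q_iH(cx_i^0)\ge\beta H\!\left(c\,\bm{q}^\top\bm{x}_0\right)$ for every $\bm{x}_0$, i.e.\ $\sum_i q_i H(y_i)\ge H\!\left(\sum_i q_i y_i\right)$ for all $\bm{y}\in\mathbb{R}^n$ (here $\|\bm{q}\|_1=1$ is used). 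Choosing all coordinates except two equal to the conditional mean $m=\tfrac{q_1y_1+q_2y_2}{q_1+q_2}$ makes $\sum_i q_i y_i=m$ and reduces the inequality to $H(\mu_0 y_1+(1-\mu_0)y_2)\le\mu_0 H(y_1)+(1-\mu_0)H(y_2)$ for all $y_1,y_2$, with the single ratio $\mu_0=\tfrac{q_1}{q_1+q_2}\in(0,1)$; since $H$ is continuous, this one-ratio convexity forces $H$ to be convex, a contradiction. In the second case $H$ is affine on some nondegenerate interval $[a,b]$, say $H(y)=\gamma y+\delta$ there; the box $B=\{\bm{x}:cx_i\in[a,b]\ \forall i\}$ is forward-invariant because each $x_i(t+s)$ is a convex combination of the $x_j(t)$, and on $B$ one has $V(\bm{x})=\beta\gamma c\sum_i q_i x_i+\beta\delta$. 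Since $\sum_i q_i x_i$ is invariant along the dynamics (it equals $\bm{q}^\top\bm{x}_0$), $V$ is constant on every trajectory lying in $B$, yet $B$ contains non-equilibrium points (take $x_1\ne x_2$), so $V$ is not strict.

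The only non-routine ingredients are (i) the entrywise positivity of $e^{-\textbf{L}s}$ for irreducible $\textbf{L}$ and $s>0$, which I will record as a short lemma or a citation to Perron--Frobenius theory, and (ii) the elementary but not entirely trivial fact that a continuous function satisfying the convexity inequality for a single fixed ratio $\mu_0\in(0,1)$ (and all pairs of points) is convex — I will either cite this or include the brief iteration argument that propagates the inequality to ratios $\mu_0^k\to 0$ and, more generally, to a dense subset of $(0,1)$, followed by an appeal to continuity. A minor point to state carefully is the domain on which ``strictly convex'' is meant: working with trajectories in $\mathbb{R}^n$, or equivalently — by the homogeneity noted after \eqref{eq:hyperplane} — letting the simplex parameter $\alpha$ range over $\mathbb{R}_{>0}$, lets all of the above run verbatim; I expect this domain bookkeeping, rather than any estimate, to be the part most in need of care.
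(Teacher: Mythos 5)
Your proposal is correct, and it splits into two halves relative to the paper. The sufficiency direction is essentially the paper's argument: expand $x_i$ through the row-stochastic flow map, apply \eqref{eq:Jensen} with weights $[\textbf{P}^{(t)}]_{ij}$, and collapse the double sum via $\bm{q}^\top\textbf{P}^{(t)}=\bm{q}^\top$. Your only addition there is to make strictness honest by invoking entrywise positivity of $e^{-\textbf{L}s}$ for irreducible $\textbf{L}$ — the paper asserts the strict inequality in \eqref{eq:liapproofjens1} directly from strict convexity, which tacitly needs exactly the positivity you record as a lemma. The necessity direction is where you genuinely diverge, to your advantage. The paper's converse splits into ``$H$ concave on a small interval'' and ``$H$ linear on an interval,'' which is not an exhaustive dichotomy for a merely continuous non-strictly-convex $H$ (a continuous function can fail to be convex without being concave on any subinterval). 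Your dichotomy — ``not convex'' versus ``convex but not strictly convex'' — is exhaustive; the first case is handled by passing to the $t\to\infty$ limit to extract the full Perron-weighted Jensen inequality and then reducing to single-ratio convexity (which, with continuity, does imply convexity, by the iteration argument or Kuhn's theorem you mention), and the second case recovers the paper's ``affine on an interval'' argument via the forward-invariant box and conservation of $\bm{q}^\top\bm{x}$. The price of your route is the two auxiliary facts you flag (positivity of the semigroup, single-ratio convexity implies convexity); what it buys is a converse that actually covers all continuous $H$, plus the explicit verification that $V$ attains its minimum on $M(\alpha)$ only at equilibrium. The domain bookkeeping you worry about is indeed the only remaining care point (the theorem states $H:\mathbb{R}\to\mathbb{R}$ while the configuration space is the positive orthant), but it affects the paper's proof equally and is resolved exactly as you propose.
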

\begin{proof}
A strict Lyapunov function satisfies the
strict inequality $V(\bm{x}(t)) <V(\bm{x}(0))$ along trajectories for any $0<t < \infty$, (except one starts at the equilibrium point).
This is true if $H$ is strictly convex, because then,
\begin{align}
&V(\bm{x}(t))=\beta \sum_{i=1}^n q_iH(c x_i(t))  \\
=&\beta \sum_{i=1}^n q_i H\left(  \sum_{j=1}^n \left[\textbf{P}^{(t)}\right]_{ij} c x_j(0) \right) 
 < \beta \sum_{i=1}^n q_i \sum_{j=1}^n \left[\textbf{P}^{(t)}\right]_{ij}H(c x_j(0) ) \label{eq:liapproofjens1}\\
 = &\beta \sum_{j=1}^n \sum_{i=1}^n  q_i \left[\textbf{P}^{(t)}\right]_{ij} H(c x_j(0)) 
  = \beta \sum_{j=1}^n  q_j H(c x_j(0))= 
 V(\bm{x}(0)).
\end{align} 
Inequality in \eqref{eq:liapproofjens1} follows from \eqref{eq:Jensen}, and $\sum_{i=1}^n  \left[\textbf{P}^{(t)}\right]_{ij} =1$, for any $t> 0$ by construction, see \eqref{eq:perronmatrix} with \eqref{eq:perronmatrix1}.
Conversely, because $V(\bm{x}(0))$ can equivalently be written as the right hand side of the inequality \eqref{eq:liapproofjens1}, and $V(\bm{x}(t))$ as the left hand side of \eqref{eq:liapproofjens1}, the strict inequality \eqref{eq:liapproofjens1} must be true for $V(\cdot)$ to be a strict Lyapunov function.
Suppose $H$ is concave on a small interval $I\subset \mathbb{R}_{> 0} \setminus \{a \bm{1}\}, a \in \mathbb{R}$, choose $\bm{x}(0) \in I^n$, and $t>0$ such that $\bm{x}(t) \in I^n$, as well.
Then, on the small interval $I^n$, the function $V$ increases due to \eqref{eq:Jensen}, which is a contradiction to Lyapunov stability.
Suppose $H$ is linear on $I$, i.e., both convex and concave. Then,
\eqref{eq:liapproofjens1} yields equality by definition of convexity/concavity via \eqref{eq:Jensen}, and $V$ cannot be a strict Lyapunov function.
Thus, $H$ must be strictly convex, which therefore is a necessary and sufficient property .
\end{proof}

If $H$ is strictly concave, then $V(\bm{x})=-\beta \sum_{i=1}^nq_iH(c x_i)$ is a Lyapunov function.

The specific structure that we impose on an additive Lyapunov function with $V_i(x_i)=\beta q_i H(c x_i)$, $i\in N$, implies that, locally, the Lyapunov function may appear distinct in terms of the local constants $q_i$, but a possible non-linearity in the state arising in the specification of the global system quantity $V(\bm{x})$ is identical at all nodes $i\in N$. The positive scalars $\beta$ and $c $ characterize system-wide parameters.

\begin{example}[Electric energy in RC circuits]\label{ex:capacitor} 
Consider a passive LTI electric network of $n$ grounded identical capacitances $C$ whose non-grounded plates are related through a resistance network. The grounded plates are assumed to be at voltage $v_{\textnormal{ref}}$. Each capacitance, of charge $q_i$, injects a current $I_i=-\dot{q}_i$ into a node of the resistance network. Let $\textbf{M}$ be the signed node-to-edge incidence matrix, with every of the $m$ rows containing a $+1$ and a $-1$ encoding the extremities of the edge in the resistance network and the conventional sense of current along that edge. Then the Kirchhoff Current Law, Ohm's Law and Kirchhoff Voltage Law combine classically into $\bm{I}=-\textbf{M}^\top \textbf{W} \textbf{M} \bm{x}$, where $x_i$ is the potential (voltage) at node $i$, and $\textbf{W}$ is the $|E|$-by-$|E|$ diagonal matrix of conductances. The matrix  $\textbf{M}^\top \textbf{W} \textbf{M}$ is none but the symmetric Laplacian matrix $\textbf{L}$ of the resistance network  with conductances as edge weights, called Kirchhoff matrix in this context. From the capacitance's constitutive law, we also know $I_i=C \dot{x}_i$, leading to a global consensus equation, $\dot{\bm{x}}=- C^{-1} \textbf{L} \bm{x}$. The total energy in the circuit is additive and takes the form $\sum_{i=1}^n \frac{1}{2} C\left(x_i-v_{\textnormal{ref}}\right)^2$. The dynamics is therefore a gradient descent of 
the stored electric energy, which satisfies the form  \eqref{eq:thm1liapfunc} with $\beta=nC$, the total system capacitance, $\bm{q}=\frac{1}{n}\bm{1}$, $c=1$, and $H_{\textnormal{elec}}(x)=\frac{1}{2}(x-v_{\mathrm{ref}})^2$.
 The asymptotically reached consensus therefore corresponds to an equalisation of nodal voltage potentials at capacitors, so that no currents flow across the resistors, and stored electric energy reaches a minimum.
 \end{example}


Given an irreducible consensus system with initial condition $\bm{x}_0$ in the positive orthant, one may scale the linear dynamics from state $\bm{x}(t)$ to normalized state $\bm{\rho}(t)=\frac{\bm{x}(t)}{\mathsf{a}}$, that we shall call the \emph{density} of state $\bm{x}(t)$. The density evolves in configuration space $M(\alpha=1)$, with final consensus value $\sum_{i=1}^n  q_i \rho_i(t)=1$, for all times $t \geq 0$.

Define the vector $\bm{p}:=\left( q_1 \rho_1,\cdots , q_n \rho_n \right)^\top$.
Then, $\bm{p}$ is a probability mass vector with components $p_i\in (0,1)$ representing a measure of the probability that a random walker on the graph $\mathsf{G}$, with transition rates given by the entries of $-\textbf{L}$, is present at node $i\in N$, following the equation $\dot{\bm{p}}^{\top}=-\bm{p}^{\top}\textbf{L}$. Those probabilities converge to the stationary probability mass (Perron) vector $\bm{q}$.
The components $\rho_i=p_i/q_i$, represent a probability density function of $\bm{p}$ with respect to the stationary probability of presence $\bm{q}$, providing another intepretation for $\bm{\rho}$. 

The strict Lyapunov function \eqref{eq:thm1liapfunc} in Theorem \ref{thm:extensiveLyap}, if we choose $c=\mathsf{a}^{-1}(\bm{x}(t))$, may be expressed as
\begin{equation}\label{eq:liapprob}
V(\bm{x})=\beta \sum_{i=1}^nq_iH(\rho_i)=\beta \sum_{i=1}^nq_iH\left( \frac{p_i}{q_i}\right).
\end{equation}

\begin{example}[Kullback-Leibler divergence]\label{ex:KL}
The function $H_{\textnormal{Ent}}(x):=x\log x$ is a convex function on $\mathbb{R}_{>0}$. 
For an irreducible consensus system, the choice $\beta=1$ and $c=\mathsf{a}^{-1}(\bm{x}_0)$ yields, with $\bm{\rho}=c\bm{x}$ the strict Lyapunov function
\begin{equation}
V_{\textnormal{KL}}(\bm{x}):= \sum_{i=1}^n q_i \rho_i\log \rho_i 
=\sum_{i=1}^n p_i\log \frac{p_i}{q_i}\triangleq D(\bm{p}||\bm{q}). \label{eq:vkl2}
\end{equation}
This is the Kullback-Leibler divergence introduced in section \ref{ssec:3liap} for  balanced consensus systems, which here is a Lyapunov function for the larger class of irreducible consensus systems.
When the system reaches consensus, then the relative measure is uniform, i.e., $\bm{\rho}=\bm{1}$, as an irreducible Markov chain converges to $\lim_{t\to\infty}\bm{p}(t)=\bm{q}$.
 \end{example}
 
 \begin{example}[Gibbs free energy]\label{ex:gibbs}
In a chemical reaction network the driving forces are the differences of chemical potentials $\mu_i$ of species $i\in N$, (in analogy to voltage potential differences in electric circuits), where
$\mu_i=RT\log \frac{m_i}{m_{i,\mathrm{ref}}}$
with $R$ the gas constant, $T$ the reaction temperature, $m_i$ a concentration given as mole number per reaction volume of species $i$, and $m_{i,\mathrm{ref}}$ a reference value for $m_i$, attained at equilibrium. 
see, e.g., \cite{vdSchaft2013}.
The chemical potentials admit as a Lyapunov function Gibbs free energy, defined as, e.g., in \cite{vdSchaft2013},
 \begin{equation}\label{eq:gibbsa}
V_{\textnormal{Gibbs}}(\bm{m}||\bm{m}_{\mathrm{ref}}):=
RT \sum_{i=1}^n
m_i\log \frac{m_i}{m_{i,\mathrm{ref}}} - (m_i-m_{i,\mathrm{ref}})\geq 0,
\end{equation}
The function \eqref{eq:gibbsa} has a Kullback-Leibler form as \eqref{eq:vkl2} with an added linear term.
In important classes of reaction networks the sum of species concentrations remains constant, cf., e.g., \cite{Kacser1986}, and this
 allows us to bring \eqref{eq:gibbsa} to the form \eqref{eq:thm1liapfunc}:
 Denote the conserved total number of molecules per volume by 
 $\alpha=\sum_{i=1}m_i=constant$, set $\beta=RT \alpha$, $\rho_i=\frac{m_i}{m_{i,\mathrm{ref}}} \triangleq \frac{x_i}{\mathsf{a}(\bm{x})}$, for all $i\in N$, where $\mathsf{a}(\bm{x})=\frac{\alpha}{n}$,
and  $\bm{q}=\frac{1}{\alpha}\bm{m}_{\textnormal{ref}}$, the vector of normalized equilibrium concentrations. Then, \eqref{eq:gibbsa} becomes 
\begin{equation}\label{eq:gibbs}
V_{\textnormal{Gibbs}}(\bm{x})=
RT \alpha \sum_{i=1}^n q_i
\left( \rho_i  \log  \rho_i  -  \rho_i +  1\right)=\beta \sum_{i=1}^nq_iH_{\textnormal{Gibbs}}(\rho_i),
\end{equation}
with $H_{\textnormal{Gibbs}}(\rho_i):=\rho_i(\log \rho_i - 1) + 1 $.

Asymptotically, a reaction network as consensus seeking system
equalizes chemical potential differences, which is achieved when $\bm{m}(t)=\bm{m}_{\mathrm{ref}}$, i.e., when the vector of relative measures $\bm{\rho}$ is uniform, i.e., $\lim_{t\to\infty}\bm{\rho}(t)=\lim_{t\to\infty}\frac{1}{\mathsf{a}}\bm{x}(t)=\bm{1}$, so that Gibbs free energy attains a minimum.
\end{example}
\begin{remark}[Csisz\'{a}r's $f$-divergence]
The class of functions \eqref{eq:liapprob} is known in information theory as $f$-divergences from a probability distribution $\bm{p}$ to another distribution $\bm{q}$, see, e.g., \cite{Csiszar2004}.
\end{remark}

\subsection{Gradient flow in additive potentials }

Let $I\subset \mathbb{R}$ be an interval, $f: I\to \mathbb{R}$ be an increasing continuous function and define the ratio given by divided differences at non-identical points $x,y \in I$,
\begin{equation}\label{eq:Kh}
K_f(x,y):=\frac{x-y}{f(x)-f(y)}.
\end{equation}
\begin{proposition}\label{prop:symm}
The function $K_f(\cdot,\cdot)$ is positive and symmetric in both arguments, i.e., for all non-identical $x,y \in I \subset \mathbb{R}$, $K_f(x,y)=K_f(y,x)>0$. 
\end{proposition}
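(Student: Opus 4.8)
The plan is to dispatch symmetry by a one-line algebraic manipulation and positivity by a sign argument, both resting only on the hypothesis that $f$ is (strictly) increasing and on $x\neq y$.

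First I would note that strict monotonicity of $f$ guarantees $f(x)\neq f(y)$ whenever $x\neq y$, so the quotient in \eqref{eq:Kh} is well defined at every pair of non-identical points. For symmetry, I would simply write
\[
K_f(y,x)=\frac{y-x}{f(y)-f(x)}=\frac{-(x-y)}{-(f(x)-f(y))}=\frac{x-y}{f(x)-f(y)}=K_f(x,y),
\]
which needs nothing beyond the definition.

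For positivity, I would argue by the sign of the two divided differences: since $f$ is increasing, $x>y$ implies $f(x)>f(y)$, so numerator and denominator of $K_f(x,y)$ are both positive; and $x<y$ implies $f(x)<f(y)$, so both are negative. In either case the ratio is strictly positive, hence $K_f(x,y)>0$ for all non-identical $x,y\in I$. One may phrase this uniformly by observing that $(x-y)$ and $(f(x)-f(y))$ always have the same sign when $f$ is increasing, so their quotient lies in $\mathbb{R}_{>0}$.

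There is essentially no hard step here; the only point requiring a moment's care is the implicit reading of ``increasing'' as strictly increasing, which is exactly what is needed for $K_f$ to be finite and nonzero on non-identical arguments, and for the sign comparison to be an equality of signs rather than merely a weak inequality.
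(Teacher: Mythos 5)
Your proof is correct and is simply the explicit version of the paper's one-line argument ("follows by definition of $K_f$ with properties of $f$"): symmetry because swapping $x$ and $y$ negates both numerator and denominator, and positivity because the two divided differences share the same sign when $f$ is increasing. Your remark that "increasing" must be read as strictly increasing (so that $f(x)\neq f(y)$ for $x\neq y$) is a worthwhile clarification, but the approach is the same as the paper's.
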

\begin{proof}
Follows by definition of $K_f(\cdot,\cdot)$ with properties of the function $f$.
\end{proof}

Next, given an undirected $n$-node graph $\mathsf{G}$, with symmetric graph Laplace matrix $\textbf{L}=[l_{ij}]$, a vector $\bm{y} \in \mathbb{R}^n$, and an increasing $\mathscr{C}^1$- function $f:\mathbb{R}\to \mathbb{R}$, we define the matrix $\textbf{K}_f(\bm{y})$ having components
\begin{equation}\label{eq:Kirchhoffmat}
[\textnormal{\textbf{K}}_{f}]_{ij}:=\left\{\begin{tabular}{c l }
$- l_{ij}\cdot  K_{f}\left(y_i,y_j\right)$ & if $(i,j)\in E$,\\
$\sum_{j=1}^n l_{ij} \cdot K_{f}\left(y_i,y_j\right)$, & if $i=j \in N$.
\end{tabular}\right. 
\end{equation}
The matrix $\textbf{K}_f(\bm{y})$ represents a symmetric, state-dependent matrix with  Laplacian form  associated to $\mathsf{G}$, so that it has the same sparsity structure as the usual Laplacian $\textbf{L}$.


With the following theorem, the main result of this paper, we solve the Inverse Problem: Given a symmetric LTI consensus system and an additive Lyapunov function $V(\bm{x})$ as in Theorem \ref{thm:extensiveLyap}, we construct a symmetric matrix function $\textbf{G}^{-1}(\bm{x})$, that has the sparsity structure imposed by $\mathsf{G}$, and
that allows to formulate the LTI consensus dynamics as a Riemannian gradient dynamics driven by $\nabla V(\bm{x})$.

\begin{theorem}[Gradient flow formula]\label{thm:tensorconsensus}
Consider an irreducible symmetric LTI consensus system,
\begin{equation}\label{eq:thmM}
\Sigma: \left\{\begin{tabular}{c}
$\dot{\bm{x}}(t)=-\textnormal{\textbf{L}} \bm{x}(t)$\\
$\bm{x}(0)=\bm{x}_0 \in M$
\end{tabular}\right., \ \ \ M(\alpha)= \left\{\bm{x}\in \mathbb{R}^n_{>0}: \sum_{i=1}^nx_i/n=\alpha>0 \right\}.
\end{equation}
Set the density $\bm{\rho}=\frac{1}{\alpha}\bm{x}$, with consensus value
$\mathsf{a}(\bm{x}_0)=\alpha$.
  Consider a strictly convex $\mathscr{C}^2$-function $H:\mathbb{R}_{>0}\to\mathbb{R}$ so that, according to Theorem \ref{thm:extensiveLyap}, the additive  function
\begin{equation}\label{eq:thmliap}
V(\bm{x})=\sum_{i=1}^nV_i(x_i)=\alpha\sum_{i=1}^n  H(\rho_i),
\end{equation}  
is a strict Lyapunov function for $\Sigma$.
Then, the consensus dynamics generates a gradient flow of $V(\bm{x})$, so that
\begin{equation}\label{eq:consgradK}
\dot{\bm{x}}(t)=-\textnormal{\textbf{L}} \bm{x}(t)=-\textnormal{\textbf{G}}^{-1}(\bm{x}(t))\nabla V(\bm{x}(t)),
\end{equation}
where $\textnormal{\textbf{G}}^{-1}(\bm{x})$ is the matrix 
\begin{equation}\label{eq:Kmat}
 \textnormal{\textbf{G}}^{-1}(\bm{x})=\alpha \cdot \textnormal{\textbf{K}}_{\nabla_{\rho} H}(\bm{\rho})
\end{equation}
with $\textnormal{\textbf{K}}_{\nabla_{\rho} H}(\bm{\rho})$ as defined in \eqref{eq:Kirchhoffmat}.
That is, solutions on $M(\alpha)$ represent a gradient descent flow of $V$ on the set $M(\alpha)$ equipped with a Riemannian structure defined by scalar products via the matrix function $\textnormal{\textbf{G}}^{-1}(\cdot)$.
\end{theorem}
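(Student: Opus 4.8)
The plan is a direct verification, in which the only genuine computation is a one-line consequence of the definition of $K_f$. First I would compute the Euclidean gradient of $V$. Since $V(\bm x)=\alpha\sum_{k}H(\rho_k)$ with $\rho_k=x_k/\alpha$ and $\alpha$ a constant fixed by the initial datum, the chain rule gives $\partial V/\partial x_i=\alpha\,H'(\rho_i)\cdot(1/\alpha)=H'(\rho_i)$, i.e.\ $\nabla V(\bm x)=\big(H'(\rho_1),\dots,H'(\rho_n)\big)^{\top}$. Next I would record the identity built into \eqref{eq:Kh}: for $x\neq y$, $K_f(x,y)\,(f(x)-f(y))=x-y$; taking $f=\nabla_{\rho}H=H'$ and the nodal pair $(\rho_i,\rho_j)$ this becomes $K_{H'}(\rho_i,\rho_j)\,(H'(\rho_i)-H'(\rho_j))=\rho_i-\rho_j$ whenever $\rho_i\neq\rho_j$ (and the factor $H'(\rho_i)-H'(\rho_j)$ vanishes when $\rho_i=\rho_j$, so the value of $K_{H'}$ on the diagonal is immaterial to what follows).

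I would then evaluate the right-hand side of \eqref{eq:consgradK} entrywise. By \eqref{eq:Kirchhoffmat} the matrix $\textbf{K}_{H'}(\bm\rho)$ is the graph Laplacian of $\mathsf G$ carrying the edge weights $\widetilde w_{ij}:=w_{ij}\,K_{H'}(\rho_i,\rho_j)$, so for any vector $\bm v$ one has $[\textbf{K}_{H'}(\bm\rho)\,\bm v]_i=\sum_{j:(i,j)\in E}\widetilde w_{ij}\,(v_i-v_j)$. Taking $\bm v=\nabla V(\bm x)$ and applying the identity above,
\begin{align*}
[\textbf{K}_{H'}(\bm\rho)\,\nabla V(\bm x)]_i
&=\sum_{j:(i,j)\in E}w_{ij}\,K_{H'}(\rho_i,\rho_j)\,(H'(\rho_i)-H'(\rho_j))\\
&=\sum_{j:(i,j)\in E}w_{ij}\,(\rho_i-\rho_j)=[\textbf{L}\bm\rho]_i .
\end{align*}
Multiplying by $\alpha$ and using $\alpha\bm\rho=\bm x$ together with \eqref{eq:Kmat} gives $\textbf{G}^{-1}(\bm x)\nabla V(\bm x)=\alpha\,\textbf{L}\bm\rho=\textbf{L}\bm x$, hence $-\textbf{G}^{-1}(\bm x)\nabla V(\bm x)=-\textbf{L}\bm x=\dot{\bm x}$, which is exactly \eqref{eq:consgradK}.

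It then remains to check that $\textbf{G}^{-1}(\bm x)$ truly encodes a Riemannian structure on $M(\alpha)$. Symmetry is immediate from $\textbf{L}=\textbf{L}^{\top}$ and Proposition \ref{prop:symm}. Because $H$ is strictly convex and $\mathscr C^2$, $H'$ is increasing and $\mathscr C^1$, so Proposition \ref{prop:symm} yields $\widetilde w_{ij}>0$ at every $\bm x\in M(\alpha)\subset\mathbb R^n_{>0}$ (extending $K_{H'}$ continuously across the diagonal by $1/H''$); thus, for each $\bm x$, $\textbf{G}^{-1}(\bm x)=\alpha\,\textbf{K}_{H'}(\bm\rho)$ is the Laplacian of a connected weighted graph---connected precisely because the consensus system is irreducible---hence positive semidefinite with kernel exactly $\mathrm{span}\{\bm 1\}$. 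Its restriction to the tangent space $T_{\bm x}M(\alpha)=\{\bm v:\sum_i v_i=0\}=\bm 1^{\perp}$ is therefore positive definite, and the inverse of that restriction is the metric tensor $\textbf{G}(\bm x)$, varying continuously in $\bm x$. Since $\textbf{K}_{H'}(\bm\rho)$ is a symmetric Laplacian, $\bm 1^{\top}\textbf{G}^{-1}(\bm x)=\bm 0^{\top}$, so $\textbf{G}^{-1}(\bm x)\nabla V(\bm x)\in T_{\bm x}M(\alpha)$ (consistently, $\dot{\bm x}=-\textbf{L}\bm x$ preserves $\sum_i x_i$ and keeps trajectories on $M(\alpha)$), and for every $\bm v\in T_{\bm x}M(\alpha)$ one has $\langle\textbf{G}^{-1}(\bm x)\nabla V(\bm x),\bm v\rangle_{\textbf{G}(\bm x)}=\nabla V(\bm x)^{\top}\bm v=\mathrm dV_{\bm x}(\bm v)$, which is the defining property of the Riemannian gradient; hence $\dot{\bm x}=-\mathrm{grad}_{\textbf{G}}V$ is a gradient descent of $V$ on $M(\alpha)$.

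I expect the algebra in the first two paragraphs to be routine once the divided-difference identity is isolated, and it is really the only computational content. The step that needs care is the last one: $\textbf{G}^{-1}(\bm x)$ is singular on all of $\mathbb R^n$ (it always annihilates $\bm 1$), so the phrase ``Riemannian structure'' must be read on the $(n-1)$-dimensional leaf $M(\alpha)$, where non-degeneracy has to be checked and where one must confirm that $\textbf{G}^{-1}\nabla V$ reproduces the \emph{intrinsic} gradient; irreducibility (connectivity of $\mathsf G$) and strict convexity of $H$ (positivity of the divided differences $K_{H'}$) are exactly the hypotheses that make both verifications go through. It is also worth remarking that the solution of the Inverse Problem is not unique---any $\textbf{G}^{-1}$ agreeing with \eqref{eq:Kmat} on $\bm 1^{\perp}$ would serve---and that the particular choice \eqref{eq:Kmat} is singled out by requiring $\textbf{G}^{-1}$ to inherit the zero/nonzero pattern of $\textbf{L}$.
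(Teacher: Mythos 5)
Your proof is correct and follows essentially the same route as the paper's: the chain rule gives $\nabla V(\bm{x})=(H'(\rho_1),\dots,H'(\rho_n))^{\top}$, the divided-difference identity $K_{H'}(\rho_i,\rho_j)\,(H'(\rho_i)-H'(\rho_j))=\rho_i-\rho_j$ converts the weighted Laplacian acting on $\nabla V$ back into $\textbf{L}\bm{x}$, and Proposition \ref{prop:symm} together with the diagonal limit $1/H''$ delivers symmetry, positivity and smoothness of $\textnormal{\textbf{G}}^{-1}$. Your final paragraph is in fact slightly more careful than the paper's own argument, since you make explicit that $\textnormal{\textbf{G}}^{-1}$ is singular on $\mathbb{R}^n$ and that non-degeneracy and the defining property of the intrinsic gradient must be verified on the leaf $M(\alpha)$ with tangent space $\bm{1}^{\perp}$, where irreducibility guarantees $\ker\textnormal{\textbf{G}}^{-1}=\mathrm{span}\{\bm{1}\}$.
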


\begin{proof}
By the chain rule,
\begin{equation}\label{eq:VHeq}
\nabla_x V_i(x_i)=\alpha\frac{\partial H(\rho_i)}{\partial \rho_i} \frac{\partial \rho_i}{\partial x_i}=\alpha\,\nabla_{\rho}H(\rho_i)\,\frac{1}{\alpha}=\nabla_{\rho}H(\rho_i).
\end{equation}
Then, for all $i\in N$, 
\begin{align}
  \dot{x}_i
  = &\sum_{j=1}^n  l_{ij}(x_j-x_i)=\sum_{j=1}^n  l_{ij}\,\alpha\frac{1}{\alpha} \frac{\nabla_{\rho} H(\rho_j)-\nabla_{\rho} H(\rho_i)}{\nabla_{\rho} H(\rho_j)-\nabla_{\rho} H(\rho_i)}(x_j-x_i) \\
  =&\sum_{j=1}^n  l_{ij}\,\alpha  \frac{\frac{1}{\alpha}(x_j-x_i)}{\nabla_{\rho} H(\rho_j)-\nabla_{\rho} H(\rho_i)}( \nabla_{\rho} H(\rho_j)-\nabla_{\rho} H(\rho_i))\\  
  \stackrel{\eqref{eq:Kh}}{=}&\sum_i   \alpha\,l_{ij} K_{\nabla_{\rho} H}( \rho_j,\rho_i) ( \nabla_{\rho} H(\rho_j)-\nabla_{\rho} H(\rho_i)) \\
 \stackrel{\text{Prop.}\,\eqref{prop:symm}}{=}&\sum_i  \alpha \, l_{ij} K_{\nabla_{\rho} H}( \rho_i,\rho_j) ( \nabla_{\rho} H(\rho_j)-\nabla_{\rho} H(\rho_i))\\
  \stackrel{\eqref{eq:Kmat},\eqref{eq:VHeq}}{=}&-\sum_{j=1}^n [\textbf{G}^{-1}]_{ij}(x_i,x_j)\nabla_{\rho} H(\rho_j)
  \stackrel{\eqref{eq:thmliap}}{\Leftrightarrow} -\textbf{G}^{-1}(\bm{x})\nabla V(\bm{x})=-\textbf{L}\bm{x}.
\end{align}
The matrix \eqref{eq:Kmat} is symmetric, because $l_{ij}=l_{ji}$ by hypothesis and $K_{\nabla_{\rho} H}(\cdot,\cdot)$ is symmetric in both arguments, see Proposition \ref{prop:symm}.
As $H$ is strictly convex, $\nabla_{\rho} H$ is increasing, so that by Proposition \ref{prop:symm}, the function $K_{\nabla_{\rho} H}(\cdot,\cdot)$ is positive for non-identical arguments.
Due to the Laplacian structure, $\textbf{G}^{-1}(\cdot)$ defines a positive definite bilinear on $T^*_{\bm{x}}M$, the set of Lyapunov function gradients, for all $\bm{x}\in M$, (as required for a gradient flow), because $\mathsf{ker}(\textbf{G}^{-1}(\cdot))=\left\{c\bm{1},c\in \mathbb{R}\right\}$, so that it vanishes only at the equilibrium state.
The limit 
\begin{equation}\label{eq:limK}
\lim_{\rho_j\to \rho_i} K_{\nabla_{\rho} H}(\rho_i,\rho_j)=\left.\left(\frac{\partial^2H(\rho)}{\partial \rho^2}\right)^{-1}\right\vert_{\rho=\rho_i}
\end{equation}
exists, because $H$ is twice differentiable and the second derivative is positive and finite, whenever $\nabla_{\rho} H$ is increasing, so that \eqref{eq:limK} is positive and finite, too.
Eventually, $\textbf{G}^{-1}(\cdot)$ is smoothly varying on $M$, so that \eqref{eq:consgradK} indeed represents Riemannian gradient dynamics.
\end{proof}

The inverse matrix function, $\textbf{G}^{-1}(\cdot)$, appears natural in characterizing gradient structures in consensus systems, rather than $\textbf{G}(\cdot)$ itself, because it inherits the sparsity structure imposed by the interconnection structure $E$ of the graph $\mathsf{G}$.

Notice that strict convexity of $H$ is equivalent to $\partial^2 H(x) /\partial x^2>0$, so that the reciprocal of the scalar curvature of $H$ is also positive. The divided difference components of $\textbf{G}^{-1}$ approximate the reciprocal of the curvature of $H$, compare to \eqref{eq:limK}, so that this choice makes implicitly use of the strict convexity property of the chosen potential, in guaranteeing positive semi-definiteness by construction.

\section{Discussion of non-Euclidean gradient flows}
\subsection{A circuits perspective \&  Disagreement measures \label{ssec:circview}}
The matrix $\textbf{G}^{-1}$, as defined in \eqref{eq:Kmat}, admits a particular factorization:
Let $\textbf{M}$ denote the $|E|\times |N|$ node to edge signed incidence matrix of $\mathsf{G}$, define the edge index $e=(i,j)\in E=\left\{1,2,\ldots,|E|\right\}$, and the $|E|\times |E|$ diagonal matrix having $[\textbf{G}^{-1}]_{ij}$-components on the main diagonal,
\begin{equation}
\textbf{W}_f(\bm{\rho}):=\mathsf{diag}\left( \cdots, \alpha \, l_{ij} \frac{\rho_i-\rho_j}{f(\rho_i)-f(\rho_j)},\cdots \right),
\end{equation}
where we keep $f$ an arbitrary strictly increasing function   for the moment.
Then, we can write $\textbf{G}^{-1}(\bm{x})=\textbf{M}^\top\textbf{W}_f(\bm{\rho})\textbf{M}$, so that the linear consensus gradient dynamics \eqref{eq:consgradK} becomes
\begin{equation}\label{eq:cdynKOhm}
\dot{\bm{x}}=-\textbf{L}\bm{x}=-\textbf{G}^{-1}(\bm{x})\nabla V(\bm{x})=-\textbf{M}^\top\textbf{W}_f(\bm{\rho})\textbf{M}\nabla V(\bm{x}).
\end{equation}

In the context of circuit theory this factorization is known to result from Kirchhoff's current and voltage law together with Ohm's law, where the matrix $\textbf{G}^{-1}$ is also known as Kirchhoff matrix --- a static representation of a resistor network.
In this case, equation \eqref{eq:cdynKOhm} describes the autonomous dynamics of an initially charged passive circuit system, as in Example \ref{ex:capacitor}.

Referring to Figure \ref{fig:KOhmdyn}, the general analogue of Kirchhoff's and Ohm's law find expression in $\textbf{G}^{-1}(\bm{x})=\alpha\textbf{K}_{\nabla_{\rho} H}(\bm{\rho})$, as 
 \begin{equation}\label{eq:KLOhm}
 \bm{u}_N \stackrel{ \text{(KCL)}}{=}\textbf{M}^\top\bm{y}_E, \hspace*{0.5cm} 
 \bm{y}_E\stackrel{ \text{(Ohm)}}{=}\textbf{W}_{\nabla_{\rho}H}(\bm{\rho})\bm{u}_E, \hspace*{0.5cm} 
 \bm{u}_E\stackrel{ \text{(KVL)}}{=}-\textbf{M} \bm{y}_N,
 \end{equation}
 cf., e.g., \cite{Strang2010} ch. 2. 
\begin{figure}[]
\centering
\begin{tikzpicture}[scale=1, circuit ee IEC]
\draw[color=black, rounded corners, fill=gray!30,line width=1pt] (-2.05,0) rectangle (4.15,3);
\draw[color=black,thick, fill=blue!35] (-0.6,1.7) rectangle (2.6,2.8);
 \draw[color=black,thick,fill=red!35] (-1.25,0.2) rectangle (3.4,1.3);
\draw[color=black,thick,fill=white] (-0.25,2) rectangle (0.25,2.6);
\node at (0,2.3) {$\frac{1}{s}$};
\draw[color=black,thick,fill=white]   (1.3,2) rectangle (2.3,2.6);
\node at (1.8,2.3) {$\nabla V(\cdot)$};
\draw[fill]  (-1.8,2.3) circle (0.07);

\node at (-1.25,2.55) {$\bm{u}_N$};
\node at (-0.45,2.55) {$\dot{\bm{x}}$};
\node at (0.9,2.55) {$\bm{x}$};
\node at (3.3,2.55) {$\bm{y}_N$};
\node at (2.15,1.05) {$\bm{u}_E$};
\node at (0,1.05) {$\bm{y}_E$};

\draw[color=black,thick,fill=white] (2.5,1.1) rectangle (3.1,0.5) ;
\node at (2.8,0.8) {$\textbf{M}$};

\draw[color=black,thick,fill=white] (-0.4,1.1) rectangle (-1.05,0.5) ;
\node at (-0.7,0.85) {$\textbf{M}^\top$};

\draw[color=black,thick,fill=white] (0.3,1.1) rectangle (1.75,0.5) ;
\node at (1.05,0.8) {$\textbf{W}_{\nabla_{\rho} H}(\bm{\rho})$};

\draw[-latex] (-1.8,2.3) -- (-0.25,2.3) ;
\draw[-latex] (0.25,2.3) -- (1.3,2.3) ;
\draw[-latex] (2.3,2.3) -- (3.9,2.3)-- (3.9,0.8)-- (3.1,0.8) ;
\draw[-latex] (2.5,0.8) -- (1.75,0.8) ;
\draw[-latex] (0.3,0.8) -- (-0.4,0.8) ;
\draw[-latex](-1.05,0.8)--(-1.8,0.8)--(-1.8,2.25) ;

\node at (-1.6,2.05) {\LARGE-};
\end{tikzpicture}
\caption{Output feedback representation of consensus gradient dynamics \eqref{eq:cdynKOhm} with controller system (red) in Kirchhoff-Ohm factorized form}
\label{fig:KOhmdyn}
\end{figure}
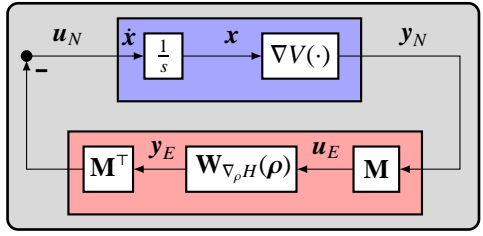
The standard Kirchhoff matrix known from LTI passive circuits is a constant-coefficient graph Laplace matrix. 
This case is recovered by taking a sum-of-squares Lyapunov function as potential, so that the closed loop dynamics is that of a LTI RC circuit: For instance, when we consider $n$ unit-capacitors, the state is a voltage given by $\bm{x}=\bm{v}=\nabla V_{\textnormal{elec}}(\bm{v})=\bm{y}_N$, with
$V_{\textnormal{elec}}(\bm{v})=\frac{1}{2}\sum_{i=1}^n(v_i-v_{\textnormal{ref}})^2$, state velocity corresponds to current, and
$[\textbf{G}_{\textnormal{elec}}^{-1}]_{ij}=l_{ij}\frac{v_i-v_j}{v_i-v_{\textnormal{ref}}-v_j+v_{\textnormal{ref}}}=l_{ij}$ is the conductance of the resistor-edge $(i,j)$, see also Example \ref{ex:capacitor}.

What is new here in our general gradient framework, is that when we do not restrict to sum-of-squares potential functions. 
Ohm's law, as well as the input-output constitutive relation for capacitor nodes, $\bm{y}_N=\nabla V(\frac{1}{s}\bm{u}_N)$, may take non-linear forms, in dependence on the chosen non-quadratic additive $V$. 
Only when we choose the increasing function $f$ in the $K_f(\cdot,\cdot)$-components, that determine the non-linearity in the edge weights, as $f(\cdot)=\nabla_x V_i(\cdot)=\nabla_{\rho}H(\cdot)$ as in \eqref{eq:Kmat},
do the non-linearities in the state to output functions at nodes and in the resistors interconnecting nodes cancel each other.
Otherwise the closed loop dynamics determines a non-linear diffusion:
Set $\nabla_{\rho}H(\rho)=h(\rho)$, and using symmetry of $K_f$ in both arguments, we obtain dynamics
\begin{align}
\dot{x}_i&=\sum_{j=1}^n  \alpha l_{ij} \frac{\rho_j-\rho_i}{f(\rho_j)-f(\rho_i)}\left(h(\rho_j)-h(\rho_i)\right) \\
&=
\sum_{j=1}^n   l_{ij} \frac{h(\rho_i)-h(\rho_j)}{f(\rho_i)-f(\rho_j)}\left(x_j-x_i\right) \ \ \Leftrightarrow \ \
\dot{\bm{x}}=-\textbf{L}_{hf}(\bm{x})\,\bm{x} \label{eq:covar}
\end{align}
as $\frac{h(\rho_j)-h(\rho_i)}{f(\rho_j)-f(\rho_i)}=\frac{h(\rho_i)-h(\rho_j)}{f(\rho_i)-f(\rho_j)}$, for increasing $h$, and with $\textbf{L}_{hf}(\bm{x})$ a state-dependent Laplacian parametrized by increasing functions $f$ and $h$. (An example for this non-linear diffusion dynamics is given in the following section).

We see any additive strict Lyapunov function $V(\bm{x})$ as in Theorem \ref{thm:extensiveLyap} with $c=\mathsf{a}(\bm{x}_0)^{-1}$ as a generalization of the sum-of-squares collective disagreement.
For symmetric consensus systems, we introduce the associated general class of group disagreements as dissipation rate of collective disagreements, generalizing again from the sum-of-squares case:
\begin{align}
\Psi_{\textnormal{V}}(\bm{x}):=-&\dot{V}(\bm{x})=\nabla V(\bm{x})\cdot \textbf{G}^{-1}(\bm{x})\nabla V(\bm{x}) = \nabla V(\bm{x})\cdot \textbf{L}\bm{x} \\
 =&\sum_{(i,j)\in E} l_{ij}\left(x_i-x_j\right)\left(\nabla V_i(x_i)-\nabla V_j(x_j)\right). \label{eq:disagreementgen}
\end{align}
In the RC circuit metaphor, the group disagreement as dissipation rate $\dot{V}_{\textnormal{elec}}(\bm{x})=-\Psi_{\textnormal{L}}(\bm{v})=\bm{v}^\top\textbf{L}\bm{v}$  has the interpretation of the rate of heat transfer from the quadratic electric energy held in states localized at nodes to the environment across edges as resistors.
In analogy to the sum-of-squares energy example
the group disagreements $\Psi_{\textnormal{V}}(\bm{x})$ in \eqref{eq:disagreementgen}, viewed from a circuits perspective, represent a rate of heat production across edges as non-linear resistors.
For symmetric consensus (gradient) systems this group disagreement is a Lyapunov function itself.

In the remaining part we shall detail this non-linear circuits viewpoint using Gibbs free energy as additive potential.

\begin{remark}
The realization of a linear consensus system as output feedback structure as depicted in Figure \ref{fig:KOhmdyn} is a generalization of the known state feedback structure and the linear port-Hamiltonian view on symmetric consensus dynamics \cite{vdSchaft2011a}.
\end{remark}


\subsection{Free energy \& Kullback-Leibler gradient dynamics \label{ssec:relentgrad}  }

Let us apply Theorems \ref{thm:extensiveLyap} and \ref{thm:tensorconsensus} to a symmetric consensus system to obtain a gradient flow of Gibbs free energy as potential function.

\begin{corollary}[Gibbs free energy gradient flow]\label{cor:KLgradflow}
Consider a symmetric consensus system as in Theorem \ref{thm:tensorconsensus} with state density $\bm{\rho}=\frac{1}{\mathsf{a}}\bm{x}$, $\mathsf{a} >0$ the equilibrium value of each $x_i$, $i\in N$, set $\beta=\mathsf{a} n$, and
$H_{\textnormal{Gibbs}}(\rho_i)=\rho_i(\log(\rho_i)-1)$,   $i\in N$. Then,
\begin{equation}\label{eq:Vkla}
V_{\textnormal{Gibbs}}(\bm{x})=\mathsf{a}\sum_{i=1}^n H_{\textnormal{Gibbs}}(\rho_i)=
\sum_{i=1}^nx_i\left(\log \frac{x_i}{\mathsf{a}}-1\right)
\end{equation}
is a strict Lyapunov function and
the consensus dynamics generate a gradient flow
in the additive potential $V_{\textnormal{Gibbs}}(\bm{x})$
with $\textnormal{\textbf{G}}^{-1}_{\textnormal{Gibbs}}$-matrix having non-zero components for $(i,j) \in E$,
\begin{equation}\label{eq:Ggibbs}
[\textnormal{\textbf{G}}^{-1}_{\textnormal{Gibbs}}(\bm{x})]_{ij}:=\mathsf{a}\,l_{ij}K_{\log}(\rho_i,\rho_j)=
\mathsf{a} \, l_{ij}\frac{\rho_i-\rho_j}{\log(\rho_i)-\log(\rho_j)}.
\end{equation}
That is, the shape of the Riemannian manifold is specified by the functions $K_{\log}(\rho_i,\rho_j)$ as defined in \eqref{eq:Kh}, weighted by $\mathsf{a}\,l_{ij}$, the components of the time-invariant graph Laplace matrix scaled by the consensus value.
\end{corollary}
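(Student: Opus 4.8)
The plan is to obtain the statement as a direct specialization of Theorems~\ref{thm:extensiveLyap} and~\ref{thm:tensorconsensus} to the concrete generator $H=H_{\textnormal{Gibbs}}$, so that the work reduces to checking hypotheses and bookkeeping constants. First I would record that $H_{\textnormal{Gibbs}}(\rho)=\rho(\log\rho-1)$ is $\mathscr{C}^2$ on $\mathbb{R}_{>0}$, with $H_{\textnormal{Gibbs}}'(\rho)=\log\rho$ and $H_{\textnormal{Gibbs}}''(\rho)=1/\rho>0$; hence $H_{\textnormal{Gibbs}}$ is strictly convex. Since the configuration space $M(\alpha)$ in Theorem~\ref{thm:tensorconsensus} is the arithmetic-mean level set $\{\sum_i x_i/n=\alpha\}$, the system is average preserving and $\bm q=\tfrac1n\bm1$. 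Applying Theorem~\ref{thm:extensiveLyap} with this $\bm q$, with $c=\mathsf{a}^{-1}$ and $\beta=\mathsf{a}\,n$, gives that $V(\bm x)=\beta\sum_i q_i H_{\textnormal{Gibbs}}(cx_i)=\mathsf{a}\sum_i H_{\textnormal{Gibbs}}(\rho_i)$ is a strict Lyapunov function; a one-line simplification, $\mathsf{a}\sum_i\rho_i(\log\rho_i-1)=\sum_i x_i(\log(x_i/\mathsf{a})-1)$, identifies it with $V_{\textnormal{Gibbs}}$ as written in~\eqref{eq:Vkla}. I would also remark that whether one uses $H_{\textnormal{Gibbs}}(\rho)=\rho(\log\rho-1)$ or the version $\rho(\log\rho-1)+1$ of Example~\ref{ex:gibbs} is immaterial: the two differ by the constant $\beta\sum_i q_i=\mathsf{a}$, which affects neither strict convexity nor $\nabla V$.

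Second, because $H_{\textnormal{Gibbs}}$ is strictly convex and $\mathscr{C}^2$, the map $\nabla_\rho H_{\textnormal{Gibbs}}(\rho)=\log\rho$ is an increasing $\mathscr{C}^1$ function on $\mathbb{R}_{>0}$, which is exactly the hypothesis under which Theorem~\ref{thm:tensorconsensus} constructs the gradient structure. Invoking that theorem with $f=\nabla_\rho H_{\textnormal{Gibbs}}=\log$, formula~\eqref{eq:Kmat} gives $\textbf{G}^{-1}_{\textnormal{Gibbs}}(\bm x)=\mathsf{a}\,\textbf{K}_{\log}(\bm\rho)$, and substituting the divided-difference definition~\eqref{eq:Kh}, $K_{\log}(\rho_i,\rho_j)=(\rho_i-\rho_j)/(\log\rho_i-\log\rho_j)$, into the Laplacian template~\eqref{eq:Kirchhoffmat} produces precisely the off-diagonal entries~\eqref{eq:Ggibbs}, with the diagonal fixed by the row-sum-zero property. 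Positive definiteness of the associated bilinear form on the cotangent fibres $T^*_{\bm x}M$, smooth dependence on $\bm x\in M$, and the existence of the diagonal limit --- here the logarithmic-mean degeneracy $\lim_{\rho_j\to\rho_i}K_{\log}(\rho_i,\rho_j)=(H_{\textnormal{Gibbs}}''(\rho_i))^{-1}=\rho_i$ --- are all inherited verbatim from Theorem~\ref{thm:tensorconsensus}, so no new estimate is required.

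There is no genuine obstacle here; the only point requiring care is the constant bookkeeping in the first step, namely matching the triple $(\beta,c,\bm q)$ of Theorem~\ref{thm:extensiveLyap} with the single parameter $\alpha=\mathsf{a}$ of Theorem~\ref{thm:tensorconsensus} and with the claimed normalization $\beta=\mathsf{a}\,n$, and verifying that the passage from $\bm x$ to the density $\bm\rho=\bm x/\mathsf{a}$ commutes with taking gradients as in~\eqref{eq:VHeq}. Once these identifications are in place the corollary is immediate.
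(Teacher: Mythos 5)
Your proposal is correct and follows essentially the same route as the paper's own proof: verify strict convexity of $H_{\textnormal{Gibbs}}$, invoke Theorem~\ref{thm:extensiveLyap} with $\bm q=\frac{1}{n}\bm 1$, $c=\mathsf a^{-1}$, $\beta=\mathsf a n$ to get the strict Lyapunov property, compute $\nabla_\rho H_{\textnormal{Gibbs}}=\log$, and substitute into \eqref{eq:Kh} and \eqref{eq:Kmat} to read off \eqref{eq:Ggibbs}. The only blemish is a harmless slip in your aside: the additive constant separating $\rho(\log\rho-1)$ from $\rho(\log\rho-1)+1$ in $V$ is $\beta\sum_i q_i=\mathsf a n$, not $\mathsf a$, which of course changes nothing about convexity or $\nabla V$.
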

\begin{proof}
The function $V_{\textnormal{Gibbs}}(\bm{x})$ is a strict Lyapunov function by Theorem \ref{thm:extensiveLyap}, with $\bm{q}=\frac{1}{n}\bm{1}$ the Perron vector, and with the choice $c=\frac{1}{\mathsf{a}}$.
For all $i\in N$,  we have
\begin{equation}\label{eq:gibbs1}
\nabla_{\rho} H_{\textnormal{Gibbs}}(\rho_i)= \log( \rho_i ) + \rho_i\frac{1}{\rho_i} -1 =\log\left(\rho_i\right),
\end{equation}
which is an increasing function so that the weights \eqref{eq:gibbs1} are positive definite.
Substitution into the ratio $K_{\nabla_{\rho} H_{\textnormal{Gibbs}}}$ as defined in \eqref{eq:Kh}, yields the $\textbf{G}_{\textnormal{Gibbs}}^{-1}$ components, by definition \eqref{eq:Kmat}
\end{proof}
 
In the circuits metaphor, the gradient of Gibbs free energy as output at a node $i\in N$, $y_{N,i}=\nabla_{\rho} H_{\textnormal{Gibbs}}(\rho_i)=\log \rho_i$,  represent the (non-linear) chemical potential at node $i$, just as a capacitor voltage in the RC-circuit case is a nodal (voltage) potential for sum-of-squares energy, compare to Examples \ref{ex:gibbs} \& \ref{ex:capacitor}. 
When we do not choose the Ohm-type law associated to $y_{N,i}=\nabla_{\rho} H_{\textnormal{Gibbs}}(\rho_i)$ across edges (as assumed in Theorem \ref{thm:tensorconsensus}), but rather the geometry $\textbf{G}^{-1}=\textbf{L}$, then we obtain the non-linear equalizing (diffusion) dynamics for chemical potentials,
\begin{equation}\label{eq:equalgibbs}
\dot{\bm{x}}=-\textbf{L}\nabla V_{\textnormal{Gibbs}}(\bm{x})=:-\textbf{L}_{\log}(\bm{x})\bm{x}.
\end{equation}
with Laplacian $\textbf{L}_{\log}(\bm{x})$ such that $-[\textbf{L}_{\log}(\bm{x})]_{ij}:=\frac{1}{\mathsf{a}}l_{ij}\frac{\log \rho_i - \log \rho_j}{\rho_i-\rho_j}$; here we exploit the Laplacian structure, to obtain the change of variables
 $l_{ij}(\log\rho_j-\log \rho_i)=l_{ij}\frac{\log \rho_i - \log \rho_j}{\rho_i -  \rho_j}(x_j-x_i)\frac{1}{\mathsf{a}}$, as a particular example of \eqref{eq:covar}.

Gibbs free energy and the Kullback-Leibler Lyapunov function
yield the same gradient structure on the set of probability vectors:
 With
$D(\bm{x}||\mathsf{a}\bm{1})=V_{\textnormal{KL}}(\bm{x})=\sum_{i=1}^nx_i\log \frac{x_i}{\mathsf{\mathsf{a}}}$
and $\bm{\rho}=\frac{1}{\mathsf{a}} \bm{x}$, we have $\nabla V_{\textnormal{KL},i}(x_i)=\log\rho_i + 1$, $i\in N$, and therefore, the linear consensus is retrieved as a gradient flow on the Riemannian structure
\begin{equation}\label{eq:lgm}
[\textbf{G}^{-1}_{\textnormal{KL}}(\bm{x})]_{ij}:=\mathsf{a}l_{ij}\frac{\rho_i-\rho_j}{\log\rho_i-\log \rho_j}\triangleq [\textnormal{\textbf{G}}^{-1}_{\textnormal{Gibbs}}(\bm{x})]_{ij},
\end{equation}
following Theorem \ref{thm:tensorconsensus}.

The components $K_{\log}(\cdot,\cdot)$ in \eqref{eq:Ggibbs} and \eqref{eq:lgm} represent the logarithmic mean of its arguments, cf., e.g, \cite{Carlson1972}.
The logarithmic mean yields interesting perspectives to explore convergence bounds for non-linear diffusion such as in \eqref{eq:equalgibbs} in terms of arithmetic and geometric mean inequalities. 

\section{Conclusion }
We presented a generalization of known disagreement measures and gradient structures in linear consensus systems where convexity is instrumental and replaces quadratic measures.
As a particular case, we introduced relative entropy (free energy) as collective disagreement and associated group disagreement.
While non-quadratic quantities are hardly explored in standard linear consensus theory, they are elementary and at the basis of powerful tools in the study of Markov chains, in estimation theory, and statistical physics.
Viewing linear consensus systems from these perspectives offers fruitful research directions.

\section*{Acknowledgment}
The first author would like to thank Dr. Nicolas Hudon for the many discussions
during his research stay at the Universit\'{e} catholique de Louvain.
The second author was supported by the Interuniversity Attraction Pole ``Dynamical Systems, Control and Optimization (DYSCO)'', initiated
by the Belgian State, Prime Minister's Office, and the Action de Recherche Concert\'ee funded by the Federation Wallonia-Brussels.



\section*{Bibliography}
\bibliographystyle{elsarticle-num}
\bibliography{IntEnergyVarConsBib}








\end{document}